\documentclass{amsart}
\usepackage{amsmath,amsfonts,amsthm,amssymb,indentfirst,epic,url,graphics,needspace}

\setlength{\textwidth}{6.5in}
\setlength{\textheight}{9.00in}
\setlength{\evensidemargin}{0in}
\setlength{\oddsidemargin}{0in}
\setlength{\topmargin}{-.5in}
\sloppy

\setlength{\mathsurround}{.167em}

\newtheorem{theorem}{Theorem}
\newtheorem{lemma}[theorem]{Lemma}

\newtheorem{proposition}[theorem]{Proposition}
\newtheorem{definition}[theorem]{Definition}
\newtheorem{question}[theorem]{Question}

\renewcommand{\r}{\mathrm}

\raggedbottom

\begin{document}

\begin{center}
\texttt{Comments, corrections,
and related references welcomed, as always!}\\[.5em]
{\TeX}ed \today
\vspace{2em}
\end{center}

\title%
{A note on factorizations of finite groups}
\thanks{Readable at
\url{http://math.berkeley.edu/~gbergman/papers/}.
}

\subjclass[2010]{Primary: 20D60.
}
\keywords{factorization of a finite group as a product of subsets}

\author{George M. Bergman}
\address{University of California\\
Berkeley, CA 94720-3840, USA}
\email{gbergman@math.berkeley.edu}

\begin{abstract}
In Question 19.35 of the Kourovka Notebook \cite{Kourovka19},
M.\,H.\,Hooshmand asks whether, given a finite group $G$
and a factorization
$\r{card}(G)= n_1\ldots n_k,$ one can always
find subsets $A_1,\ldots,A_k$ of $G$ with $\r{card}(A_i)=n_i$
such that $G=A_1\ldots A_k;$ equivalently, such that the
group multiplication map $A_1\times\ldots\times\nolinebreak A_k\to G$ is
a bijection.

We show that for $G$ the alternating group on $4$ elements,
$k=3,$ and $(n_1,n_2,n_3) = (2,3,2),$ the answer is negative.
We then generalize some of the tools used in our proof,
and note an open question.
\end{abstract}
\maketitle

\section{The example.}\label{S.eg}

In this section we develop the example described in the Abstract.

\begin{definition}[after {\cite[\S1]{MHH_factor}}, cf.\ {\cite[p.\,6]{S+S}}]\label{D.decomp}
If $G$ is a group, $k$ a positive integer,
and $A_1,\ldots,A_k$ subsets of $G,$
we shall write $G = A_1\cdot\ldots\cdot A_k,$
and call this a $\!(k\!$-fold\textup{)} {\em factorization} of $G,$ if
the multiplication map $A_1\times\ldots\times A_k\to G$ is bijective.

\textup{(}Thus if $G$ is finite, this bijectivity condition can
alternatively be expressed, as in \cite[Question 19.35]{Kourovka19},
by the conditions $G=A_1\ldots A_k,$
and $\r{card}(G)=\r{card}(A_1)\ldots\r{card}(A_k).)$
\end{definition}

Note that if $G=A_1\cdot\ldots\cdot A_k$ is a $\!k\!$-fold
factorization of $G,$ then for $1\leq j< k,$
$G=(A_1\ldots A_j)\cdot\nolinebreak(A_{j+1}\ldots A_k)$
is a $\!2\!$-fold factorization.
So though the example we are working toward is a $\!3\!$-fold
factorization, the key to its proof will be the following
property of $\!2\!$-fold factorizations.

\begin{lemma}\label{L.div}
Let $G=A\cdot B$ be a factorization of a finite group.
Then the order of the subgroup of $G$ generated by $A$ is a
multiple of $\r{card}(A),$
and the order of the subgroup generated by $B$ is a
multiple of $\r{card}(B).$
\end{lemma}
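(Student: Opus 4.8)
The plan is to reduce the statement to a single subgroup and then to recognize, inside that subgroup, a smaller factorization. I will prove the assertion about $A$; the assertion about $B$ then follows by the symmetric argument, or, if one prefers, by noting that $G = B^{-1}\cdot A^{-1}$ is again a factorization (apply the inverse map to the bijection $A\times B\to G$), and that $\langle B^{-1}\rangle = \langle B\rangle$ and $\r{card}(B^{-1}) = \r{card}(B)$, so that applying the $A$-case to this factorization gives the $B$-case.

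Write $H = \langle A\rangle$ for the subgroup generated by $A$. The central idea is that the given factorization restricts to a factorization of $H$, namely $H = A\cdot(B\cap H)$. To set this up, I would first record the key observation that, since $A\subseteq H$, a product $ab$ with $a\in A$ and $b\in B$ satisfies $ab\in H$ if and only if $b\in H$; indeed $b = a^{-1}(ab)$ and $a^{-1}\in H$. I then consider the restriction of the bijection $A\times B\to G$ to the subset $A\times(B\cap H)$. By the observation its image lies in $H$, and it is still injective, being a restriction of an injection. For surjectivity onto $H$: given $h\in H$, surjectivity of the original map yields $a\in A$ and $b\in B$ with $h = ab$, and since $h\in H$ the observation forces $b\in H$, so the preimage pair $(a,b)$ in fact lies in $A\times(B\cap H)$. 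Hence $A\times(B\cap H)\to H$ is a bijection.

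From this bijection I immediately obtain $\r{card}(H) = \r{card}(A)\cdot\r{card}(B\cap H)$, whence $\r{card}(A)$ divides $\r{card}(H) = \r{card}(\langle A\rangle)$, which is the claim.

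I do not expect a genuine obstacle here; essentially all of the content sits in the single observation that membership of $ab$ in $H$ is governed by $b$ alone (precisely because $A\subseteq H$), and it is this that lets the factorization descend to $H$. The one place meriting care is the surjectivity of the restricted map onto $H$: one must check that the preimages, under the global bijection, of elements of $H$ really land in $A\times(B\cap H)$ rather than merely in $A\times B$, and this is exactly the ``$ab\in H\Rightarrow b\in H$'' direction of the observation.
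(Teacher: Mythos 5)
Your proof is correct, and it rests on the same basic fact as the paper's: since $A\subseteq H=\langle A\rangle$, whether a product $ab$ lies in a given right coset of $H$ is determined by $b$ alone. The paper exploits this globally: each set $A\,b$ $(b\in B)$ lies in a single right coset of $H$, so every right coset of $H$ is a disjoint union of such sets, each of cardinality $\mathrm{card}(A)$, whence $\mathrm{card}(A)$ divides $\mathrm{card}(H)$. You exploit it locally, restricting the bijection $A\times B\to G$ to $A\times(B\cap H)\to H$; this proves the slightly stronger structural statement that the factorization descends to $H$, namely that $H=A\cdot(B\cap H)$ is itself a factorization, from which the divisibility follows by counting. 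Your version buys a little extra --- an induced factorization of $\langle A\rangle$, which in particular identifies the quotient $\mathrm{card}(H)/\mathrm{card}(A)$ as $\mathrm{card}(B\cap H)$ --- at the cost of checking surjectivity of the restricted map, which is exactly the paper's observation applied to the coset $H$ itself. Your treatment of $B$ via the factorization $G=B^{-1}\cdot A^{-1}$ is also valid, though, as you note, the symmetric argument (intersecting $A$ with $\langle B\rangle$) works just as directly.
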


\begin{proof}
Let $H$ be the subgroup of $G$ generated by $A.$
For each $b\in B,$ the set $A\,b$ is contained in
a single right coset of $H;$ hence every right coset of $H$
is a disjoint union of sets of cardinality $\r{card}(A),$
hence $\r{card}(H)$ is a multiple of $\r{card}(A).$
The statement about $B$ is seen in the same way.
\end{proof}

We will also use the following observation.

\begin{lemma}[{after \cite{MHH_factor}}]\label{L.e_in}
If $G=A_1\cdot \ldots\cdot A_k$ is a factorization of a
group $G,$ then for all $g,\,h\in G,$\linebreak
$(g\,A_1)\cdot A_2\cdot\ldots\cdot A_{k-1}\cdot (A_k\,h)$
is also a factorization of $G.$

Hence if for some positive integers
$n_1,\ldots,n_k,$ $G$ has a $\!k\!$-fold factorization with
$\r{card}(A_i)=n_i$ $(i=1,\ldots,k),$ it has such a
factorization in which $A_1$ and $A_k$ both contain the identity
element $e.$\qed
\end{lemma}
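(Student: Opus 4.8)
The plan is to observe that prepending left translation by $g$ to the first factor and appending right translation by $h$ to the last factor alters the $\!k\!$-fold multiplication map only by (i) a harmless reindexing of the domain and (ii) composition with the two-sided translation $x\mapsto g\,x\,h$ of $G.$ Since a composite of bijections is a bijection, and the original multiplication map is bijective by hypothesis, the new multiplication map will be bijective as well, which is exactly the assertion that $(g\,A_1)\cdot A_2\cdot\ldots\cdot(A_k\,h)$ is a factorization.

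Concretely, I would write $\mu\colon A_1\times\ldots\times A_k\to G$ for the original multiplication map. Left translation by $g$ and right translation by $h$ are injective on $G,$ so $\r{card}(g\,A_1)=\r{card}(A_1)$ and $\r{card}(A_k\,h)=\r{card}(A_k),$ and the map $(a_1,\ldots,a_k)\mapsto(g\,a_1,a_2,\ldots,a_{k-1},a_k\,h)$ is a bijection $\phi$ of $A_1\times\ldots\times A_k$ onto $(g\,A_1)\times A_2\times\ldots\times(A_k\,h).$ Writing $\nu$ for the multiplication map of the new family and $\tau\colon x\mapsto g\,x\,h$ for the (bijective) two-sided translation, the identity $(g\,a_1)\,a_2\ldots a_{k-1}\,(a_k\,h)=g\,(a_1\ldots a_k)\,h$ gives $\nu\circ\phi=\tau\circ\mu.$ Hence $\nu=\tau\circ\mu\circ\phi^{-1}$ is a composite of three bijections, so it is itself a bijection, establishing the first assertion.

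For the ``hence'' clause I would argue as follows. Starting from a factorization with $\r{card}(A_i)=n_i,$ note that each $A_i$ is nonempty, since otherwise $\mu$ could not be onto the nonempty set $G;$ so I may pick $a_1\in A_1$ and $a_k\in A_k.$ Applying the first part with $g=a_1^{-1}$ and $h=a_k^{-1}$ yields a factorization in which the first factor $a_1^{-1}A_1$ contains $e=a_1^{-1}a_1$ and the last factor $A_k\,a_k^{-1}$ contains $e=a_k\,a_k^{-1},$ while the cardinalities are preserved by the translation-invariance noted above.

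The argument is elementary, and there is no substantial obstacle; the one point deserving a moment's care is the cardinality bookkeeping that makes $\phi$ a genuine bijection of \emph{product} sets. This is what ensures that the modified family is still indexed by sets of the prescribed sizes $n_1,\ldots,n_k,$ rather than merely satisfying the setwise equation $G=(g\,A_1)\ldots(A_k\,h).$
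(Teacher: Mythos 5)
Your proof is correct, and it is essentially the argument the paper has in mind: the paper marks this lemma with \qed and omits the proof as evident, the intended justification being exactly your observation that the new multiplication map is the old one composed with the bijections $x\mapsto g\,x\,h$ of $G$ and the reindexing of the domain. Your careful treatment of the nonemptiness of the $A_i$ and the cardinality bookkeeping just makes explicit what the paper leaves tacit.
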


We can now prove

\begin{proposition}\label{P.2,3,2}
Let $G$ be the alternating group on $4$ elements, a
group of order~$12.$
Then $G$ has no factorization $A_1\cdot A_2\cdot A_3$
with $(\r{card}(A_1),\,\r{card}(A_2),\,\r{card}(A_3)) = (2,3,2).$
\end{proposition}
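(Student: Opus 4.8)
The plan is to combine Lemmas~\ref{L.e_in} and~\ref{L.div} with the special subgroup structure of $A_4$. Recall that $A_4$ contains the normal Klein four-subgroup $V=\{e,(12)(34),(13)(24),(14)(23)\}$, whose three nonidentity elements are exactly the double transpositions, and that $A_4/V\cong C_3$. Suppose, toward a contradiction, that a factorization $G=A_1\cdot A_2\cdot A_3$ of type $(2,3,2)$ exists. By Lemma~\ref{L.e_in} I may assume $e\in A_1$ and $e\in A_3$, so that $A_1=\{e,a\}$ and $A_3=\{e,c\}$ for some $a,c\neq e$.

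First I would pin down $a$ and $c$. Regrouping the hypothetical factorization into the $2$-fold factorizations $G=A_1\cdot(A_2A_3)$ and $G=(A_1A_2)\cdot A_3$, Lemma~\ref{L.div} forces $|\langle A_1\rangle|$ and $|\langle A_3\rangle|$ to be multiples of $2$. Since $\langle A_1\rangle=\langle a\rangle$ and $\langle A_3\rangle=\langle c\rangle$ are cyclic and every element of $A_4$ has order $1$, $2$ or $3$, both $a$ and $c$ must have order exactly $2$; that is, $a$ and $c$ are double transpositions, so $A_1,A_3\subseteq V$.

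Next I would project to the quotient. Writing $\pi\colon A_4\to A_4/V\cong C_3$, the images $\pi(A_1)$ and $\pi(A_3)$ are trivial, so from $C_3=\pi(A_1)\pi(A_2)\pi(A_3)$ we obtain $\pi(A_2)=C_3$; as $\r{card}(A_2)=3$, the set $A_2$ meets each of the three cosets of $V$ in exactly one point. Now fix $a_2\in A_2$. Because $V$ is normal, the $\r{card}(A_1)\,\r{card}(A_3)=4$ products $a_1a_2a_3$ (with $a_1\in A_1,\ a_3\in A_3$) all lie in the single coset $a_2V$, and injectivity of the factorization forces these four products to be distinct, hence to fill that coset: $A_1\,a_2\,A_3=a_2V$. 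Conjugating by $a_2^{-1}$, this says $\{e,\,a_2^{-1}aa_2\}\{e,c\}=V$, which (since $V$ is abelian of exponent $2$ and the conjugate $a_2^{-1}aa_2$ is again a nonidentity element of $V$) holds precisely when $a_2^{-1}aa_2\neq c$.

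The crux---and the step I expect to be the main obstacle---is to exploit the conjugation action of $A_4/V$ on $V$. Conjugation by a $3$-cycle cyclically permutes the three double transpositions, and conjugation by the elements of the two nontrivial cosets of $V$ induces a $3$-cycle $\sigma$ and its inverse, respectively, while the coset $V$ acts trivially. Hence, as $a_2$ runs over the three cosets met by $A_2$, the conjugate $a_2^{-1}aa_2$ runs over $\{a,\sigma(a),\sigma^{-1}(a)\}$, which is the full set of three double transpositions. In particular one of these conjugates equals the double transposition $c$, contradicting the requirement $a_2^{-1}aa_2\neq c$ established above. This contradiction shows that no factorization of type $(2,3,2)$ can exist.
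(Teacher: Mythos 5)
Your proposal is correct and follows essentially the same route as the paper's own proof: normalize via Lemma~\ref{L.e_in}, use Lemma~\ref{L.div} to force $A_1$ and $A_3$ to be order-$2$ subgroups of the Klein four-group, observe that $A_2$ must meet all three cosets of that normal subgroup, and then use the transitivity of the conjugation action on the double transpositions to produce an element of $A_2$ conjugating the generator of $A_1$ to that of $A_3$, contradicting injectivity. Your only cosmetic difference is packaging the final contradiction as a counting statement ($A_1 a_2 A_3$ must fill the coset $a_2V$, forcing $a_2^{-1}aa_2\neq c$) rather than directly exhibiting the collision $A_1\,g\,A_3 = g\,A_3\,A_3$, which is the same computation.
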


\begin{proof}
Recall that the elements of exponent $2$ in $G$ form a normal
subgroup $N\cong Z_2\times Z_2,$ that $G$ is an extension
of $N$ by a group of order $3$ which cyclically permutes the
three proper nontrivial subgroups of $N,$ and that all elements
of $G$ not in $N$ have order~$3.$

Suppose $G=A_1\cdot A_2\cdot A_3$ were a factorization
with $\r{card}(A_1)=2,$ $\r{card}(A_2)=3,$ $\r{card}(A_3)=2.$
By Lemma~\ref{L.e_in} we can assume without loss of generality
that $A_1$ and $A_3$ have the forms
$\{e,\,g\}$ and $\{e,\,h\}$ respectively.
The orders of the groups these sets generate are the
orders of $g$ and $h,$ and by Lemma~\ref{L.div}, are even.
But the only elements of $G$ of even order have order~$2,$ hence $A_1$
and $A_3$ are in fact subgroups (which may or may not be distinct).

Since $A_1$ and $A_3$ are contained in~$N,$
for $G=A_1\,A_2\,A_3$ to hold, $A_2$ must contain
representatives of all three cosets of $N$ in $G.$
Moreover, elements of $G$ act transitively on the set of $\!2\!$-element
subgroups of $N;$ so $A_2$ must contain
an element $g$ that conjugates $A_1$ to $A_3.$

Hence when we multiply out $A_1\,A_2\,A_3,$ the result contains
$A_1\,g\,A_3 = %
g\,A_3\,A_3.$
But the multiplication map $A_3\times A_3\to A_3$ is not
one-to-one; from which we see that the multiplication map
$A_1\times A_2\times A_3\to G$ cannot be one-to-one,
contradicting the definition of a factorization.
\end{proof}

This completes our negative answer to \cite[Question 19.35]{Kourovka19}
for $k=3.$
Note that for any $k,$ a negative example with
cardinalities $n_1,\dots,n_k$ yields negative
examples for all $k'>k,$ by keeping the same $G$ and $n_1,\ldots,n_k,$
and taking $n_{k+1}=\ldots=n_{k'}=1.$
So the one remaining open case is $k=2.$
Hooshmand posed the question in that case in {\cite{MHH_overflow}}, and
refers to it as a case of particular interest in \cite{Kourovka19}.
His paper \cite{MHH_factor} includes work on that case.

\section{Strengthening our lemmas}\label{S.stronger}

In the context of Lemma~\ref{L.div}, the order of the subgroup
$H$ of $G$ generated by $A$ can change on left-multiplying
$A$ by an element $g\in G,$ a fact we implicitly used
when we applied Lemma~\ref{L.e_in}
in the proof of Proposition~\ref{P.2,3,2}.
In the next result, modified versions of that subgroup are
noted whose orders are not so affected.
Also, while Lemma~\ref{L.div} is applicable only to the first
and last sets $A_1$ and $A_k$ in a factorization
$G = A_1\cdot \ldots\cdot A_k,$
part~(iii) below obtains a similar, though
weaker, condition on the cardinalities of the other $A_i.$
(This will be slightly improved in Lemma~\ref{L.div3}.)

\begin{lemma}\label{L.div2}
Let $A_1\cdot \ldots\cdot A_k$ be a factorization of a finite group $G.$
Then

\textup{(i)}  $\r{card}(A_1)$ divides
the order of the subgroup of $G$ generated
by the set $A_1^{-1} A_1 = \{g^{-1} h\mid g,h\in A_1\},$ which
can also be described as generated by any one of
the subsets $g^{-1} A_1$ $(g\in A_1).$
Moreover, that order is also the order of the subgroup generated
by $A_1\,A_1^{-1} = \{g\,h^{-1}\mid g,h\in A_1\},$ equivalently,
by any of the subsets $A_1\,g^{-1}$ $(g\in A_1).$

\textup{(ii)} Similarly, $\r{card}(A_k)$ divides
the order of the subgroup of $G$ generated
by $A_k\,A_k^{-1},$ equivalently,
by any of the subsets $A_k\,g^{-1}$ $(g\in A_k),$
and that order is also the order of the subgroup generated
by $A_k^{-1} A_k,$ equivalently,
by any of the subsets $g^{-1} A_k$ $(g\in A_k).$

\textup{(iii)} For $1<i<k,$ $\r{card}(A_i)$ divides
the order of the {\em normal subgroup} of $G$ generated
by $A_i^{-1} A_i,$ equivalently,
by any of the subsets $g^{-1} A_i$ $(g\in A_i),$ equivalently,
by $A_i\,A_i^{-1},$ or by any of the sets $A_i\,g^{-1}$ $(g\in A_i).$
\end{lemma}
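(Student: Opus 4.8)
The plan is to dispose of the various ``equivalently'' clauses first by direct group-theoretic computation, then to obtain the divisibility assertions of (i) and (ii) from the two lemmas already proved, and finally to treat (iii), which is where the real work lies, by passing to a quotient. For the equivalences I would fix $g\in A_i$ and first check that $\langle g^{-1}A_i\rangle=\langle A_i^{-1}A_i\rangle$: the inclusion $\subseteq$ is clear since $g^{-1}A_i\subseteq A_i^{-1}A_i$, while for $\supseteq$ one writes a typical generator $g'^{-1}h$ of $A_i^{-1}A_i$ as $(g^{-1}g')^{-1}(g^{-1}h)$, a product of the inverse of one element of $g^{-1}A_i$ with another, hence an element of $\langle g^{-1}A_i\rangle$. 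The identity $A_i\,g^{-1}=g\,(g^{-1}A_i)\,g^{-1}$ exhibits $\langle A_i\,g^{-1}\rangle$ as a conjugate of $\langle g^{-1}A_i\rangle$, so the two have equal order, and the mirror image of the first computation gives $\langle A_i\,g^{-1}\rangle=\langle A_i\,A_i^{-1}\rangle$. These remarks settle every ``equivalently'' clause in (i) and (ii); and since the normal closure of a subgroup is unchanged under conjugation, conjugate subgroups have the same normal closure, which settles the corresponding clauses in (iii).

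For (i), fix $g\in A_1$. By Lemma~\ref{L.e_in}, $(g^{-1}A_1)\cdot A_2\cdot\ldots\cdot A_k$ is again a factorization, and $g^{-1}A_1$ contains $e$. Reading this as the $2$-fold factorization $(g^{-1}A_1)\cdot(A_2\ldots A_k)$ and applying Lemma~\ref{L.div}, I conclude that $\r{card}(g^{-1}A_1)=\r{card}(A_1)$ divides the order of $\langle g^{-1}A_1\rangle=\langle A_1^{-1}A_1\rangle$. Part (ii) is identical, using the right-hand half of Lemma~\ref{L.e_in} to replace $A_k$ by $A_k\,g^{-1}$ and the assertion of Lemma~\ref{L.div} about the last factor of a $2$-fold factorization.

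Part (iii) is the crux, and the main obstacle is precisely that a middle factor cannot be shifted to an end, so Lemma~\ref{L.div} does not apply directly. Here I would let $M$ be the normal subgroup generated by $A_i^{-1}A_i$ and pass to the quotient map $\pi\colon G\to G/M$. Because $A_i^{-1}A_i\subseteq M$, any two elements of $A_i$ are congruent modulo $M$, so $\pi(A_i)=\{\bar a\}$ is a single point. Writing $P=A_1\ldots A_{i-1}$ and $Q=A_{i+1}\ldots A_k$, the multiplication map $(p,a,q)\mapsto p\,a\,q$ is a bijection $P\times A_i\times Q\to G$. Now fix $\bar x\in G/M$. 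Since $\pi^{-1}(\bar x)$ is a coset of $M$, of cardinality $\r{card}(M)$, and the multiplication map is a bijection, exactly $\r{card}(M)$ triples $(p,a,q)$ satisfy $\pi(p\,a\,q)=\bar x$. But $\pi(p\,a\,q)=\pi(p)\,\bar a\,\pi(q)$ is independent of the choice of $a\in A_i$, so the number of such triples is $\r{card}(A_i)$ times the number of pairs $(p,q)$ with $\pi(p)\,\bar a\,\pi(q)=\bar x$. Hence $\r{card}(A_i)$ divides $\r{card}(M)$, which is the desired conclusion. I expect the routine verifications to be the two bracket-level computations in the first paragraph, with the genuinely new idea being the fiber-counting over $G/M$ in this last step.
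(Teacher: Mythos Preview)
Your proof is correct and follows essentially the same route as the paper's: the equivalence computations and the reduction of (i)--(ii) to Lemmas~\ref{L.div} and~\ref{L.e_in} match the paper exactly, and your fiber-counting over $G/M$ in (iii) is just the quotient-group rephrasing of the paper's observation that each set $h\,A_i\,h'$ lies in a single coset of the normal subgroup, so that each coset is partitioned into pieces of size $\r{card}(A_i)$.
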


\begin{proof}
(i)~~Combining Lemma~\ref{L.div} and Lemma~\ref{L.e_in}, we see that for
every $g\in A_1,$ $\r{card}(A_1)$
divides the order of the group generated by $g^{-1} A_1$
(an argument that we implicitly used in 
the proof of Proposition~\ref{P.2,3,2}).
Moreover, given $g,\,g'\in A_1,$ the
group generated by $g^{-1} A_1$ will contain
$(g^{-1} g')^{-1}(g^{-1} A_1) = {g'}^{-1} A_1;$
so the groups generated by $g^{-1} A_1$ are the same for
all $g\in A_1.$
Clearly their common value can also be described as the
group generated by $A_1^{-1} A_1,$ so the groups named
in the first sentence of~(i) are indeed equal.

The groups in the second sentence of~(i) are equal to one another
by the same argument.
Moreover, for any $g\in A_1,$
$A_1\,g^{-1} = g\,(g^{-1} A_1)\,g^{-1},$ so the group
generated by $A_1\,g^{-1}$ is conjugate in $G$ to the group
generated by $g^{-1} A_1.$
Hence the order of the group in the second sentence is the same
as that of the group in the first sentence.

(ii) holds by the same reasoning.

(iii) For each $i$ we similarly see that the {\em not} necessarily
normal subgroups generated by the sets named in the first half of~(iii)
are all equal, and are conjugate to the common value of those
generated by the sets named in the second half.
Hence the {\em normal} subgroups generated by these sets are all equal.
Let us call their common value $N.$

The condition $G = A_1\cdot \ldots\cdot A_k$ implies that
$G$ is the disjoint union of the sets $h\,A_i\,h'$ for
$h\in A_1\ldots A_{i-1},\ h'\in A_{i+1}\ldots A_k,$
and clearly each of these sets is wholly contained in one coset
of $N,$ namely $h\,N\,h' = hh'\,N = N\,hh'.$
Hence $N$ (and, indeed, every coset of $N)$
is the disjoint union of a family of
such sets, so $N$ indeed has order a multiple of $\r{card}(A_i).$
\end{proof}

We also note an easy strengthening of Lemma~\ref{L.e_in}.

\begin{lemma}\label{L.e_in2}
If $A_1\cdot\ldots\cdot A_k$ is a factorization of a
group $G,$ then for all $g_0,\,g_1,\ldots,g_k\in G,$\linebreak
$(g_0^{-1} A_1\,g_1)\,\cdot\,(g_1^{-1} A_1\,g_2)\,\cdot\,
\ldots\,\cdot\,(g_{k-1}^{-1} A_k\,g_k)$ is also a factorization of $G.$

In particular, if for some positive integers
$n_1,\ldots,n_k,$ $G$ has a $\!k\!$-fold factorization with
$\r{card}(A_i)=n_i$ $(i=1,\ldots,k),$ then it has such a
factorization in which all $A_i$ contain~$e.$\qed
\end{lemma}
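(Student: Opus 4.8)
The plan is to prove Lemma~\ref{L.e_in2} by the same basic mechanism used for Lemma~\ref{L.e_in}, namely showing that the proposed multiplication map is a bijection by exhibiting it as a composite of maps each already known to be bijective. The key observation is that the asserted factorization has a telescoping structure: consecutive factors share the conjugating element $g_i$, which cancels when we multiply adjacent blocks together.

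First I would make the telescoping explicit. Multiplying out the product of the displayed sets, a general element is
\[
(g_0^{-1}a_1 g_1)(g_1^{-1}a_2 g_2)\cdots(g_{k-1}^{-1}a_k g_k),
\]
where $a_i\in A_i$, and the interior factors $g_i g_i^{-1}$ all collapse to give $g_0^{-1}(a_1 a_2\cdots a_k)\,g_k$. Thus the multiplication map for the new sets factors as the composite of the original multiplication map $A_1\times\cdots\times A_k\to G$, $(a_1,\dots,a_k)\mapsto a_1\cdots a_k$, followed by the two-sided translation $x\mapsto g_0^{-1} x\,g_k$. The first map is a bijection by hypothesis (it is the given factorization), and left/right translation by fixed group elements is a bijection of $G$; hence the composite is a bijection, which is exactly the statement that $(g_0^{-1}A_1 g_1)\cdots(g_{k-1}^{-1}A_k g_k)$ is a factorization.

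I would also need the cardinality bookkeeping to line up: conjugation $a\mapsto g_{i-1}^{-1}a g_i$ is injective on $G$, so $\r{card}(g_{i-1}^{-1}A_i g_i)=\r{card}(A_i)=n_i$, and the identity just verified shows the product set is all of $G$ with the product of cardinalities equal to $\r{card}(G)$. For the ``in particular'' clause, starting from any factorization with $\r{card}(A_i)=n_i$ I would choose the $g_i$ so that each translated set contains $e$: picking $a_i\in A_i$ arbitrarily and solving $g_{i-1}^{-1}a_i g_i=e$ recursively (set $g_0=e$, then $g_i=a_i^{-1}g_{i-1}$) forces $e$ into every block while preserving all cardinalities.

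The only point requiring care — and the step I would flag as the main obstacle — is confirming that the interior conjugating elements genuinely cancel as written, i.e.\ that the $\!(i\!-\!1)$-th factor ends in $g_i$ and the $\!i\!$-th begins with $g_i^{-1}$, so that the telescoping collapse is valid for every choice of the $a_i$ simultaneously rather than merely setwise. This is what makes the new product a single clean two-sided translate of the old one; once that alignment is checked the bijectivity is immediate, so the lemma is marked \qed in the statement and needs no displayed proof beyond this cancellation remark.
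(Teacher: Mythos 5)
Your proof is correct and is exactly the argument the paper intends: the paper states this lemma with a terminal \qed and no written proof, treating the telescoping cancellation $(g_0^{-1}a_1g_1)(g_1^{-1}a_2g_2)\cdots(g_{k-1}^{-1}a_kg_k)=g_0^{-1}(a_1\cdots a_k)g_k$ and the resulting composite-of-bijections observation as immediate. Your recursive choice $g_i=(a_1\cdots a_i)^{-1}$ for the ``in particular'' clause is likewise the intended justification, so your write-up simply makes explicit what the paper leaves to the reader.
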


A choice of $k+1$ elements $g_0,\ldots,g_k$ as above actually
gives one more degree of freedom than is needed to make
all the $A_i$ contain $e.$
This might be used to replace some particular term
by a chosen conjugate of itself.

Returning to Lemma~\ref{L.div2}, one may ask whether
in statement~(iii) thereof one can replace ``normal subgroup''
by ``subgroup'', as in~(i) and~(ii).
Probably not.
For though $G$ is the disjoint union of the
sets $h\,A_i\,h'$ referred to in the proof of~(iii), these lie in
cosets of different conjugates of $H;$ namely, $h\,A_i\,h'$ lies
in a right coset of $h\,H\,h^{-1}$ (and also in a left coset
of ${h'}^{-1} H\,h'),$ and such conjugates in general
partially overlap one another, so we can't get
a nice decomposition of any one of these cosets from our hypotheses.

The result~(iii) is very weak;
e.g., if $G$ is a simple group, it tells us nothing
that isn't evident from the definition of $A_1\cdot \ldots\cdot A_k$
being a factorization of $G.$
We give below a somewhat stronger, if not as easy to state, result.
To keep the statement from being too complicated, we shall
not use the ``strengthening'' gotten by replacing the sets in
our factorization by translates containing $e,$ but simply
understand that if this is desired, it can be achieved by
combining the result as stated with Lemma~\ref{L.e_in2}.

\begin{lemma}\label{L.div3}
In the context of Lemma~\ref{L.div2}\textup{(iii)}, let
$K,$ $H$ and $K'$ be, respectively, the subgroups of $G$ generated by
$A_1\ldots A_{i-1},$ by $A_i,$ and by $A_{i+1}\ldots A_k.$
Then $\r{card}(A_i)$ divides the order of the subgroup
$M$ of $G$ generated by the conjugates of $H$ by all members of $K,$
and also the order of the subgroup
$M'$ generated by the conjugates of $H$ by all members of $K'.$
\end{lemma}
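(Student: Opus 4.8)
The plan is to follow the proof of Lemma~\ref{L.div2}\textup{(iii)}, but to exploit that $M,$ though generally not normal in all of $G,$ is at least normalized by $K.$ First I would record this last fact. Since $M$ is generated by the conjugates $g\,H\,g^{-1}$ $(g\in K),$ conjugating these generators by a fixed $k\in K$ sends $g\,H\,g^{-1}$ to $(k g)\,H\,(k g)^{-1};$ as $g$ ranges over $K$ so does $k g,$ so the generating set is merely permuted, giving $k\,M\,k^{-1}=M.$ Thus $K$ normalizes $M,$ and since $H=e\,H\,e^{-1}$ we also have $H\subseteq M,$ hence $A_i\subseteq M.$

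Next I would bring in the factorization. Grouping consecutive factors, $G=(A_1\ldots A_{i-1})\cdot A_i\cdot(A_{i+1}\ldots A_k)$ is a factorization, so, exactly as in the proof of Lemma~\ref{L.div2}\textup{(iii)}, $G$ is the disjoint union of the sets $p\,A_i\,q$ as $p$ runs through $A_1\ldots A_{i-1}$ and $q$ through $A_{i+1}\ldots A_k,$ each such set having $\r{card}(A_i)$ elements. The key point is that every such $p$ lies in $K,$ hence normalizes $M;$ therefore $p\,A_i\,q\subseteq p\,M\,q=M\,p\,q,$ so each block $p\,A_i\,q$ is contained in a single right coset of $M$ in $G.$ Consequently the partition of $G$ into these blocks refines its partition into right cosets of $M,$ so each right coset of $M$ is a disjoint union of blocks of size $\r{card}(A_i);$ comparing cardinalities, $\r{card}(A_i)$ divides the order of $M.$

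For $M'$ I would argue symmetrically, with the sides interchanged: now each $q$ lies in $K'$ and so normalizes $M',$ giving $p\,A_i\,q\subseteq p\,M'\,q=p\,q\,M',$ which places each block in a single \emph{left} coset of $M'.$ The same counting then yields $\r{card}(A_i)$ dividing the order of $M'.$

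I expect the only real obstacle to be conceptual rather than computational: one must recognize that the proof of Lemma~\ref{L.div2}\textup{(iii)} used the full normal closure only to guarantee that each $p\,A_i\,q$ sits in one coset, and that for this it suffices that the left conjugators $p$ (all lying in $K)$ normalize the chosen subgroup---which is precisely what $K$ does for $M.$ The one place to be careful is the bookkeeping of sides: left factors $p\in K$ must be paired with \emph{right} cosets of $M,$ while right factors $q\in K'$ are paired with \emph{left} cosets of $M',$ and it is this matching that lets the normalization slide each conjugator cleanly past the subgroup.
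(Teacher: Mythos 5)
Your proposal is correct and takes essentially the same route as the paper's own proof: both decompose $G$ into the disjoint blocks $h\,A_i\,h'$ with $h\in A_1\ldots A_{i-1}$, $h'\in A_{i+1}\ldots A_k$, and use the fact that $K$ normalizes $M$ (respectively, $K'$ normalizes $M'$) to slide the left (respectively, right) conjugator past the subgroup, placing each block in a single right coset of $M$ (respectively, left coset of $M'$). The only difference is that you explicitly verify the normalization of $M$ by $K$ and the inclusion $A_i\subseteq M$, points the paper leaves implicit.
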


\begin{proof}
As before, $G$ is the disjoint union of the sets $h\,A_i\,h'$ for
$h\in A_1\ldots A_{i-1},\ h'\in A_{i+1}\ldots A_k.$
Now $h\,A_i\,h'\subseteq h\,M\,h',$ which can be
rewritten as $M\,h\,h'$ because $M$ is
normalized by $h\in K.$
So each set $h\,A_i\,h'$ lies wholly in
one right coset of $M;$ so each right coset of $M$ is a
disjoint union of sets of cardinality $\r{card}(A_i),$
yielding the first of the asserted divisibility statements.
The second holds by the analogous reasoning.
\end{proof}

The two conditions on $\r{card}(A_i)$ obtained
in the above lemma differ, in general.
For instance, if $G$ is simple, and we take $k=3,$
let $A_1=\{e\},$ let $A_2$ be a proper nontrivial subgroup
$H$ of $G,$ and let $A_3$ be a set of left coset representatives
of $H$ in $G,$ then the multiplicative bound on $\r{card}(A_2)$
given by the first assertion is its actual cardinality,
while that given by the second is the order of $G.$

Though I have noted why we cannot expect that in this situation,
$\r{card}(A_i)$ will in fact divide $\r{card}(H),$
I don't know a counterexample, so let us record the question.
It clearly comes down to

\begin{question}\label{Q.A_2}
If a finite group $G$ has a factorization
$G= A_1\cdot A_2\cdot A_3,$ must $\r{card}(A_2)$ divide
the order of the subgroup $H$ of $G$ generated by $A_2$?
\end{question}


\end{document}